\newtheoremstyle{dotless}{}{}{\itshape}{}{\bfseries}{}{ }{}
\def\Real{\hbox{I\kern-.1667em\hbox{R}}}
\theoremstyle{dotless}	
\title{Sign games on graphs}
\author{Liz Blum, Ranjan Rohatgi\\
    Saint Mary's College
    Notre Dame, Indiana  46556\\
    \\
    \\
    Lily Brustkern, Rosetta Hawkins, Neil R. Nicholson\\
    University of Notre Dame\\
    Notre Dame, Indiana  46556 \\
    \\
	}
\newtheoremstyle{dotless}{}{}{\itshape}{}{\bfseries}{}{ }{}
\def\Real{\hbox{I\kern-.1667em\hbox{R}}}
\theoremstyle{dotless}	
\newtheorem{thm}{Theorem}[section]
\newtheorem{lem}[thm]{Lemma}
\begin{document}

\maketitle

\begin{abstract}
We define the Sign Game as a two-player game played on a simple undirected mathematical graph $G$. The players alternate turns, assigning vertices of $G$ either $1$ or $-1$, and edges take on the value of the product of their endvertices. The game ends when all vertices are assigned values, and the score of the game is the sum of all edge values. One player's goal is to make the score positive while the other's is to make the score negative. In this paper we investigate the game being played on various types of graphs, determining outcomes and winning strategies. \\

\noindent AMS Subject Classification: 91A43, 05C57\\

\noindent Keywords: games on graphs, simple graph
\end{abstract}

\section{Introduction and definitions}

Nowadays people have strong opinions that oftentimes make their way into online social networks. When two people share the same (or opposite) opinion on a certain touchy subject, we might say that the social network connection between them is \textit{positive} (resp. \textit{negative}) on this matter. Take politics, for example. When two friends in a social network share the same political affiliation, it is natural to say the political connection between them is \textit{positive}, whereas it is \textit{negative} if they have opposite affiliations. 

In this paper, rather than looking at these individual connections, we want to look at the entire friendship network and investigate it as a whole. Social networks are naturally represented modeled with mathematical graphs. Some, like X or Strava, give rise to directed graphs (an edge from $v_1$ to $v_2$ means $v_1$ is \textit{following} $v_2$ in the network; a fan may follow a celebrity but the celebrity likely won't follow the fan), while others, like Facebook, being \textit{friends} is a two-way street (if $v_1$ is friends with $v_2$, then $v_2$ is friends with $v_1$). Undirected simple graphs (those with no loops and no multiple edges) best represent these latter scenarios, and in this paper, all graphs are assumed to be of this type.

A variety of two-player graph-labeling games appear in the literature. In 1991, Bodlaender, in \cite{1} extended the famous map-coloring game to a game on arbitrary finite graphs. Since then, many more games have been introduced and analyzed, including the game chromatic index \cite{3}, several graph labeling games in \cite{6}, the game of cycles \cite{5}, and the graceful game \cite{4}. Here, we define a new two-player game, called the Sign Game, that mimics the concept of measuring how positive or negative the collection of a network's connections are. 

\subsection{Defining the game}

Let $G$ be a simple undirected graph. The \textbf{Sign Game} on $G$ is a two-player game with two players \textbf{Player P} (for \textit{positive}) and \textbf{Player N} (for \textit{negative}) proceeding as follows.

\begin{enumerate}
\item Players alternate turns; the first player to play is referred to as \textbf{Player 1} and the second player as \textbf{Player 2} (so that Player P is one of Player 1 or Player 2, as is Player N). On a turn, a player assigns either a $+1$ or a $-1$ to any vertex on $G$ that has not been assigned a value.
\item When both endvertices of an edge have been assigned a value, that edge takes on the value of the product of the vertices; this is called the \textbf{score} (in points) of the edge. When the second endvertex of an edge is assigned and the edge takes on a value of $+1$ (resp. $-1$), we say that Player P (resp. Player N) \textbf{banks} $1$ (resp. $-1$) point(s). 
\item The game concludes when all vertices have been assigned values (and consequently all edges have a score). Define the \textbf{score} $s(G)$ of the game to be the sum of all edge scores.
\item Player P wins if $s(G) > 0$, Player N wins if $s(G) < 0$, and the game is a draw if $s(G) = 0$. 
\end{enumerate}

The score of the game in Fig. \ref{fig1} is $-1$, resulting in Player N winning. Any game played on the graph in Fig. \ref{fig1} would result in a winner (since the graph has $5$ edges), but would the winner always be Player N? As we will see, the result of a game often, but not always, depends on which player plays first. 

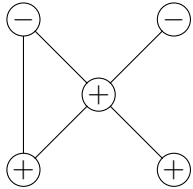
\begin{figure}[htb]
\centering
\begin{tikzpicture} 
\draw (0,.22)--(0,1.78);
\draw (.15,.15)--(.85,.85);
\draw (.15,1.85)--(.85,1.15);
\draw (1.15,1.15)--(1.85,1.85);
\draw (1.15,.85)--(1.85,.15);

\draw (0,0) circle (.22cm);
\node (00) at (0,0) {$+$};
\draw (0,2) circle (.22cm);
\node (02) at (0,2) {$-$};
\draw (1,1) circle (.22cm);
\node (11) at (1,1) {$+$};
\draw (2,0) circle (.22cm);
\node (20) at (2,0) {$+$};
\draw (2,2) circle (.22cm);
\node (22) at (2,2) {$-$};

\end{tikzpicture}
\caption{Player N wins}
 \label{fig1}
\end{figure}

Throughout this paper we will assume that both players choose to play optimally, with a primary goal of winning and a secondary goal of the game ending in a draw (that is, not allowing their opponent to win). 

\subsection{SG-equivalence}

 Because the nature of the game makes brute force investigation of anything but the smallest graphs overwhelming, we introduce a method to ``reduce" graphs to ones with fewer vertices for which playing the Sign Game on them yields the same result as playing the game on the original graph.

Two graphs $G_1$ and $G_2$, which may have some vertices assigned values, are said to be \textbf{SG-equivalent} if playing the Sign Game to completion on $G_2$ (preserving the order of play for Players P and N) yields an equal score as if it were finished on $G_1$ (that is, $s(G_1) = s(G_2)$). Figure \ref{fig2} illustrates two SG-equivalent graphs.

\begin{figure}[htb]  
\centering  

\subfigure[Graph $G_1$]  
{  
\begin{tikzpicture}[]  

\draw (0.15,0.15)--(2.85,1.85);
\draw (2.1,.19)--(2.9,1.8);
\draw (4,0)--(3.1,1.8);
\draw (6,0)--(3.15,1.85);

\draw (0,0) circle (.22cm);
\node (00) at (0,0) {$+$};
\draw (2,0) circle (.22cm);
\node (20) at (2,0) {$-$};

\draw[black,fill=black] (4,0) circle (.5ex);
\draw[black,fill=black] (6,0) circle (.5ex);

\draw (3,2) circle (.22cm);
\node (32) at (3,2) {$+$};

\end{tikzpicture}  

}  
\hspace{50pt}
\subfigure[Graph $G_2$]  
{  

\begin{tikzpicture} 

\draw (1,2) circle (.22cm);
\node (102) at (1,2) {$+$};
\draw[black,fill=black] (0,0) circle (.5ex);
\draw[black,fill=black] (2,0) circle (.5ex);

\draw (0,0)--(.85,1.85);
\draw (1.15,1.85)--(2,0);

\end{tikzpicture}  

}

\caption{SG-equivalent graphs $G_1$ and $G_2$}
\label{fig2}
\end{figure}
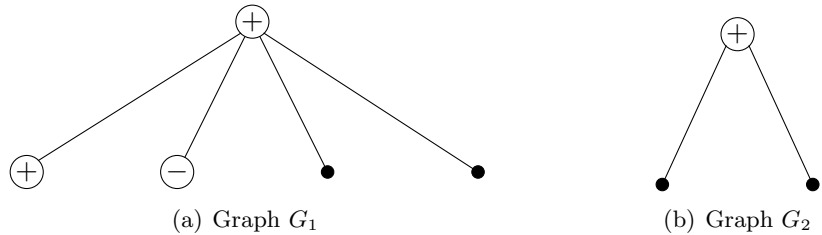

\section{Graph types}

In this section we investigate playing the Sign Game on various categories of mathematical graphs, beginning with one representing a social network where every member is connected to every other member: complete graphs.

\subsection{Complete graphs}

The \textbf{complete graph} $K_n$ ($n \geq 2$) is the graph with $n$ vertices and an edge between every pair of distinct vertices. When the Sign Game is played on $K_n$, Player N almost always wins, as shown in Thm. \ref{thm1}. Before proving it, we state the following lemma, whose proof is a simple counting argument left to the reader. 

\begin{lem} \label{lem1}
If the Sign Game is played to completion on $G = K_n$ and $a$ vertices are assigned $+1$ and $b$ vertices assigned $-1$ (so that $a+b = n$, with $a,b \geq 2$), then 

\begin{equation}
         s(G) = \binom{a}{2}+ \binom{b}{2}-ab.
    \end{equation}   
\end{lem}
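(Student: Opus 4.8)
The plan is to exploit the defining property of $K_n$---that every pair of distinct vertices is joined by exactly one edge---so that the total score can be computed purely by classifying edges according to the signs of their two endpoints. Since the score of an edge is the product of its endvertices, and each vertex carries either $+1$ or $-1$, there are only three possible edge types to track: both endpoints positive, both negative, or one of each. This partition of the edge set is the entire content of the argument.

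First I would fix the assignment, with $a$ vertices labeled $+1$ and $b$ vertices labeled $-1$, and split $E(K_n)$ into the three classes just described; because $K_n$ contains \emph{all} possible edges, these counts are simply the number of ways to choose the relevant pairs of vertices. Next I would count each class: the positive--positive edges number $\binom{a}{2}$ and each scores $(+1)(+1)=+1$; the negative--negative edges number $\binom{b}{2}$ and each scores $(-1)(-1)=+1$; and the mixed edges number $ab$, since every one of the $a$ positive vertices is joined to every one of the $b$ negative vertices, with each scoring $(+1)(-1)=-1$. Finally I would sum the three contributions to obtain
\begin{equation}
    s(G)=\binom{a}{2}+\binom{b}{2}-ab.
\end{equation}

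There is no genuine obstacle here, which matches the excerpt's remark that the proof is a simple counting argument. The only point requiring a word of care is the completeness of the graph: the formula relies on every pair of vertices being adjacent, so that the three edge classes exhaust $E(K_n)$ and their sizes are exactly $\binom{a}{2}$, $\binom{b}{2}$, and $ab$. One can sanity-check the bookkeeping by verifying that these three counts sum to $\binom{a}{2}+\binom{b}{2}+ab=\binom{a+b}{2}=\binom{n}{2}$, the total number of edges in $K_n$; this Vandermonde-type identity confirms that no edge has been omitted or double-counted, after which the score formula follows immediately.
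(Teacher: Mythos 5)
Your proof is correct and is precisely the ``simple counting argument'' the paper alludes to when leaving the lemma to the reader: partition the edges of $K_n$ into positive--positive, negative--negative, and mixed pairs, counted by $\binom{a}{2}$, $\binom{b}{2}$, and $ab$ respectively, and sum their scores. The sanity check $\binom{a}{2}+\binom{b}{2}+ab=\binom{n}{2}$ is a nice touch confirming the partition is exhaustive.
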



The result in Lem. \ref{lem1} could be stated solely in terms of $a$ and $n$, but the choice to introduce the variable $b$ allows for a cleaner proof of this section's main result, Thm. \ref{thm1}. In the proof of this theorem, we will see in many instances where a player's strategy is dependent upon what the other player just assigned. While the choice of vertex or vertex assignment will vary, we will refer to such a strategy as a \textbf{mirroring strategy}.

\begin{thm} \label{thm1}
    Player N will win the Sign Game played on $K_n$ for $n\geq 2$, unless Player N is Player 1 and either $n=2$, in which case Player P wins, or $n=4$, which results in a draw.
\end{thm}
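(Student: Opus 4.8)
The plan is to first rewrite the score from Lemma \ref{lem1} in a form that exposes what the two players are really fighting over. Writing $d = a - b$ and using $a + b = n$, a short computation turns $s(G) = \binom{a}{2} + \binom{b}{2} - ab$ into
\begin{equation*}
s(G) = \frac{(a-b)^2 - n}{2} = \frac{d^2 - n}{2}.
\end{equation*}
Thus Player P wins iff $d^2 > n$, Player N wins iff $d^2 < n$, and the game is a draw iff $d^2 = n$. The key structural observation is that on $K_n$ all unassigned vertices are interchangeable, so a player's only meaningful decision on each turn is whether to assign $+1$ or $-1$. Hence the whole game is equivalent to a one-dimensional counter game: a token starts at $\delta = 0$, each turn the mover shifts it by $+1$ or $-1$, and after $n$ turns the final position equals $d$. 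Player P tries to maximize $|d|$ (pushing the token away from $0$) while Player N tries to minimize it (pulling it toward $0$). The theorem will follow once I determine, for each parity of $n$ and each choice of who moves first, the value of $|d|$ under optimal play.

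First I would dispose of the case in which Player N moves second. Here N can use a \emph{mirroring strategy}: immediately after each of P's moves, N shifts the token back by the opposite amount. Every consecutive (P, N) pair then nets zero, so at the end $|d| = 0$ when $n$ is even and $|d| = 1$ when $n$ is odd (from P's single unanswered move). In either case $d^2 \le 1 < n$ for all $n \ge 2$, so $s(G) < 0$ and Player N wins outright, regardless of P's play.

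The substantive case is when Player N moves first, so P is the responder, and I would bound $|d|$ from both sides. For the \emph{upper} bound, N plays its forced opening move (making $|\delta| = 1$) and thereafter mirrors every move of P; this caps the final value at $|d| = 1$ when $n$ is odd and $|d| = 2$ when $n$ is even, since in the even case only P's last, unanswered push survives. For the matching \emph{lower} bound when $n$ is even, I would give P a \emph{pushing strategy}: on each turn P shifts the token further from $0$, maintaining the invariant that $|\delta| \ge 2$ after each of P's moves; this is restorable because N can change $|\delta|$ by only $1$ and P moves last. Matching the two bounds gives $|d| = 1$ for odd $n$ and $|d| = 2$ for even $n$. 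Reading off the score, odd $n$ yields $s(G) = \frac{1 - n}{2} < 0$ (N wins), while even $n$ yields $s(G) = \frac{4 - n}{2}$, which is positive exactly when $n = 2$ (P wins), zero exactly when $n = 4$ (draw), and negative for all $n \ge 6$ (N wins). These are precisely the stated exceptions.

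The main obstacle I anticipate is the lower-bound argument in the even, Player-N-first case: showing rigorously that P can guarantee $|d| \ge 2$ requires setting up the invariant carefully and checking the base case, namely P's first move, where the token already sits at $|\delta| = 1$ after N's forced opening. I would also treat the smallest graphs $K_2$ and $K_4$ with care, since their extremal configurations force $a$ or $b$ below the $a, b \ge 2$ hypothesis of Lemma \ref{lem1}; there I would either verify the score formula directly or simply check the few positions by hand, as these are exactly the exceptional outcomes on which the theorem turns.
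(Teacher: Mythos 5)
Your proposal is correct, and at its core it is the same argument as the paper's: both reduce the game on $K_n$ to tracking the imbalance between $+1$'s and $-1$'s (your $d = a-b$ is the paper's $r$, and your $s(G) = \frac{d^2-n}{2}$ is algebraically identical to the paper's $\frac{r^2-r}{2} - b$), and both rest on Player N's opposite-sign mirroring strategy to cap the imbalance, with $K_2$ and $K_4$ (N first) split off as the exceptional cases. Two things you do differently are worth noting. First, your explicit counter-game formulation, justified by the interchangeability of unassigned vertices in $K_n$, makes the reduction that the paper leaves implicit, and the closed form $\frac{d^2-n}{2}$ reads off all four outcomes ($n$ odd, $n=2$, $n=4$, even $n \geq 6$) in one line rather than the paper's case analysis of when $\frac{r^2-r}{2} < b$. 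Second, and more substantively, your pushing strategy for Player P (maintain $|\delta| \geq 2$ after each of P's moves) is a genuine repair of a weak spot in the paper: for $K_4$ the paper claims P avoids the score $-2$ outcome ``by mirroring Player N and assigning vertices identically,'' but taken literally this fails --- if N plays $+$ then $-$ against an identical copier, the game ends at $a = b = 2$ with score $-2$. Your invariant-based push (equivalently: copy N's opening, then play the majority sign, using the fact that P moves last when $n$ is even) is what actually forces $|d| \geq 2$ and hence the draw on $K_4$ and the win on $K_2$. Your caution about the $a, b \geq 2$ hypothesis of Lemma \ref{lem1} is also well placed, since the paper itself applies the formula at $(a,b) = (3,1)$; checking directly that the identity $s(G) = \binom{a}{2} + \binom{b}{2} - ab$ holds for all $a, b \geq 0$ (with $\binom{0}{2} = \binom{1}{2} = 0$) disposes of this cleanly.
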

\begin{proof}
    As in Lem. \ref{lem1}, suppose the Sign Game is played to completion on $G=K_n$ ($n \geq 2$) with $a$ vertices assigned $+1$ and $b$ vertices assigned $-1$. Without loss of generality, we can assume $a \geq b$, so that $a = b+r$ for some $r \geq 0$. By Lem. \ref{lem1}, we have

    \begin{align}
        s(G) &= \binom{a}{2} + \binom{b}{2} -ab \\
        &= \binom{b+r}{2} + \binom{b}{2}-(b+r)b \\
        &=\frac{r^2-r}{2}-b,
    \end{align}

    \noindent which is negative precisely when 

    \begin{equation} \label{completeinequality}
         \frac{r^2-r}{2}<b.
    \end{equation}

    Player N can guarantee $r \leq 2$ by mirroring any turn by Player P with an opposite vertex assignment. Should Player N have the first turn of the game, then any vertex assignment, followed by this strategy, will ensure that $r \leq 2$.

    Note that when $r=0$ or $r=1$, Eqn. \ref{completeinequality} is satisfied so long as $b \geq 1$. The described mirroring strategy yields such a value of $b$ (and consequently $r$) in the cases when Player P has the first turn, or, when $n$ is odd.

    For all other scenarios, if $r=2$, then $s(G) < 0$ when $1 < b$ (that is, Player N will win as long as there are at least two vertices assigned a $-1$). Following the described mirroring strategy, this will occur for all cases except $n =2$ or $n=4$ along with Player N having the first turn.

    So, we must consider these cases: playing on $K_2$ or $K_4$ with Player N being Player 1. The first, on $K_2$, is obvious as Player P can guarantee $s(G)=1$. When $G = K_4$, there are only two possible outcomes (as players are playing optimally) to consider. The first, $a = 3$ and $b =1$ (or equivalently $a = 1$ and $b = 3$) yields $s(G) = 0$. The other possibility of $a = b = 2$ results in a score of $-2$. Player P can guarantee this does not happen by mirroring Player N and assigning vertices identically, meaning either $a$ or $b$ is at least $3$. Thus, playing on $K_4$ results in a draw.
\end{proof}

\subsection{Star graphs}

Trees are common graphs that can represent hierarchical relationships. One particular type of tree is a \textbf{star graph}. In an office setting, a single boss may oversee multiple departments, but those departments have no relationships between themselves. This could be represented by a star graph, denoted by $S_n$, a graph with $n+1$ vertices, one of which is of degree $n$ (called the \textbf{central vertex}) and the remaining $n$ vertices (called \textbf{leaves}) each of degree one. Figure \ref{fig2} shows $S_4$ and $S_2$.

Theorem \ref{thm2} proves that the Sign Game on star graphs is completely determined. The main tool in its proof is illustrated in Fig. \ref{fig2}, reducing a game on $S_k$ to a game on $S_{k-2}$ via SG-equivalence when two leaves on $S_k$ have been assigned opposite values.

\begin{thm} \label{thm2}
When played on $S_n$, the Sign Game results in a draw when $n$ is even and Player 2 wins when $n$ is odd.
\end{thm}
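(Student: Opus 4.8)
The plan is to reduce the game to a contest over two simple quantities. In a star every edge joins the center to a leaf, so if the center is assigned the value $c \in \{+1,-1\}$ and among the leaves $p$ receive $+1$ and $m$ receive $-1$, then each edge scores $c$ times its leaf value and hence $s(S_n) = c(p-m)$. Writing $L = p-m$, the game becomes a fight over the single sign $c$ and the leaf-imbalance $L$: Player P wins exactly when $c$ and $L$ share a sign, Player N wins when they have opposite signs, and the game is a draw exactly when $L = 0$, which forces $n$ to be even. This also explains the reduction advertised before the statement, since assigning two leaves opposite values contributes $c(+1)+c(-1)=0$ to the score, so such a pair may be deleted, exhibiting the claimed SG-equivalence between $S_k$ and $S_{k-2}$. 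Answering an opponent's leaf with the opposite value on another leaf is therefore exactly the move that invokes this reduction, and the two cases below amount to controlling who is left holding the center after all such pairs cancel.

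For odd $n$ I would show Player 2 wins by a pairing strategy. Pair the center with one distinguished leaf $z$ and pair the remaining $n-1$ (an even number) leaves among themselves; this is a perfect pairing of all $n+1$ vertices, available precisely because $n$ is odd. Player 2 always answers Player 1's move with the partner of the just-played vertex, replying to a leaf in an ordinary pair with the opposite value so that the pair contributes $0$ to $L$. Since Player 2 only ever completes a pair that Player 1 has opened, the partner is always unassigned, and after all ordinary pairs cancel one is left with $L = z$ and $s(S_n) = c\,z$. Because Player 2 plays the second of the pair $\{c,z\}$, Player 2 sets the last of these two values and can force the product $c\,z$ to be $+1$ (if Player 2 is Player P) or $-1$ (if Player 2 is Player N); hence Player 2 wins. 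The base case $n=1$, where the lone edge is decided by the last mover, is the seed of this argument.

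For even $n$ the claim is a draw, which I would establish by showing neither player can force a win. First, Player 1 can force $s(S_n)=0$ by playing the center on move one and then treating the $n$ remaining leaves as $n/2$ pairs, answering each leaf Player 2 plays with its partner given the opposite value; every pair cancels, so $L=0$ and the score is $0$ regardless of $c$. Second, Player 2 can avoid losing: fixing a pairing of the $n$ leaves, Player 2 mirrors Player 1's leaves with opposite values, maintaining the invariant that at the start of each of Player 1's turns the assigned leaves are balanced and the center is untouched. The one asymmetry—and the step I expect to be the main obstacle—is that the center has no partner, so Player 2 cannot mirror it. The remedy is that the instant Player 1 assigns the center a value $c$, the balanced leaves contribute $0$ to $L$ and an even block of leaves remains with Player 2 to move; Player 2 then abandons mirroring and plays a single safe value on every remaining move, namely $-c$ if Player 2 is Player N (forcing $cL \leq 0$) or $+c$ if Player 2 is Player P (forcing $cL \geq 0$). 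Since the two players split the remaining even block equally, this safe-sign play drives $s(S_n)=cL$ to the side Player 2 needs, so Player 2 does not lose; and if Player 1 never touches the center, all leaves cancel and the center falls to Player 1 on the final move, again giving score $0$. Combining the two halves, neither player can force a win when $n$ is even, so under the optimal-play convention the game is a draw.
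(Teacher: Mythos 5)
Your proof is correct, and it takes a noticeably different (and more airtight) route than the paper's. The paper's proof runs through SG-equivalence: it asserts that since a player wins on $S_{2k}$ only if $k+1$ edges share a score, neither optimal player will ever answer a leaf with the same value just played, so consecutive leaf assignments cancel and the game reduces to the base cases $S_2$ (draw) and $S_1$ (Player 2 wins). That argument is terse and leans on an unproved claim about what optimal players ``will'' do, and it is silent about the turn on which the center gets assigned. You instead derive the closed-form score $s(S_n) = c(p-m)$ --- which is exactly Lemma \ref{bipartitelemma} specialized to $K_{1,n}$, a connection worth noting --- and then exhibit explicit pairing strategies, which is the standard way to make such outcome claims rigorous: for odd $n$, Player 2's pairing of the center with a distinguished leaf $z$ cleanly resolves who controls the product $cz$, the very point the paper glosses over; for even $n$, you correctly prove a draw by giving two separate strategies (Player 1 forces score $0$ by seizing the center; Player 2 avoids losing by opposite-value mirroring, switching to safe-sign play the moment Player 1 plays the center), and your check that the remaining leaf block is even with Player 2 to move is exactly the step the paper's reduction never confronts. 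What the paper's approach buys is brevity and the SG-equivalence machinery it reuses for disjoint unions of stars, paths, and cycles; what yours buys is a complete strategy-level verification, including the asymmetry of the unpaired center, with no appeal to assumed optimal behavior.
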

\begin{proof}
 For $n=2k$, one of the players will win if and only if at least $k+1$ of the edges have the same score. Because of this, neither player will mirror the assignment of a leaf with the same value that was just assigned. Thus, every two subsequent leaf assignments will reduce via SG-equivalence to a star graph with two less leaves. Hence, we need only consider the game being played on $S_2$, which clearly results in a draw.

When played on $S_n$ with $n = 2k+1$, similar repeated SG-equivalence reduces the game to being played on $S_1$, in which case the second player wins.
\end{proof}

When the Sign Game is played on a single star graph, the players have little choice about where they play (if the central vertex has been assigned a value, then the choice of leaf a player assigns has no impact on the final score of the game). But if the game were to be played on a disjoint collection of star graphs, decisions must be made about which component to play on. 

For graphs whose disjoint components consist specifically of star graphs, just as in Thm. \ref{thm2}, the parity of the leaves of the graphs determines the outcome of the game. The proof of the result, Thm. \ref{thmstars}, relying on mirroring and SG-equivalence similar to that in the proof of Thm. \ref{thm2}, is left to the reader (and it should be noted that Thm. \ref{thm2} is a special case of this result).

\begin{thm} \label{thmstars}
When the Sign Game is played on a disjoint union of star graphs, the game's result is as follows.

\begin{enumerate}
    \item The game is a draw if all components have an even number of leaves.
    \item Player 2 wins if at least one component has an odd number of leaves and the number of components with an even number of leaves is even.
    \item Player 1 wins if at least one component has an odd number of leaves and the number of components with an even number of leaves is odd.

\end{enumerate}
\end{thm}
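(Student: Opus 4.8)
The plan is to reduce the game to its per-component behaviour and then exploit a single parity. Writing $c_j$ for the central value of the $j$-th component and $p_j,q_j$ for the numbers of its leaves assigned $+1$ and $-1$, the score splits as $s(G)=\sum_j c_j(p_j-q_j)$; so a component with an even number of leaves contributes an even (possibly zero) amount, while a component with an odd number of leaves contributes an odd, hence nonzero, amount. The key observation is that a star with $n$ leaves has $n+1$ vertices, so even-leaf components have an odd number of vertices and odd-leaf components an even number. Consequently the total number of vertices $N$ satisfies $N\equiv E\pmod 2$, where $E$ is the number of even-leaf components, and Player $1$ makes the last move exactly when $E$ is odd. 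I would first recast the theorem in the cleaner equivalent form: \emph{if at least one component has an odd number of leaves then the player who moves last wins; otherwise the game is a draw}. This matches the three cases, since ``$E$ even'' forces $N$ even (Player $2$ last, case (2)) and ``$E$ odd'' forces $N$ odd (Player $1$ last, case (3)).

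For the decisive situation (at least one odd-leaf component), I would hand the last mover $W$ an explicit pairing/mirroring strategy, writing $\epsilon=+1$ if $W$ is Player P and $\epsilon=-1$ if $W$ is Player N. Fix in advance a matching on the vertices: in every component pair up leaves two at a time; in each odd-leaf component reserve one extra ``key'' pair consisting of its central vertex together with its last unpaired leaf $\ell_j^\ast$; and pair the leftover centrals of the even-leaf components across components. When $E$ is even this matches all $N$ vertices and $W$ is Player $2$; when $E$ is odd, $W$ is Player $1$ and opens with one free move on an even-leaf central, after which the remaining $N-1$ vertices match as above. Then $W$ answers each opponent move by playing its partner: a leaf is answered by its paired leaf with the opposite value, so the two cancel by SG-equivalence; a cross-paired even central is answered arbitrarily (harmless, since that component's leaves already cancel); and in a key pair $W$ chooses the second value so that $c_j\ell_j^\ast=\epsilon$. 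Thus every even-leaf component contributes $0$ and every odd-leaf component contributes exactly $\epsilon$, giving $s(G)=\epsilon\cdot O$, where $O\ge 1$ is the number of odd-leaf components, a strict win for $W$.

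When every component has an even number of leaves, the same matching used by the last mover $W$ now cancels all leaves and gives $s(G)=0$; hence the player who does not move last cannot win. To obtain a draw I must also show the other player $D$ can force $s(G)=0$. Here I would track the imbalance potential $\Phi=\sum_j\bigl|(\text{assigned }+\text{ leaves in }j)-(\text{assigned }-\text{ leaves in }j)\bigr|$, treat each central as a ``pass,'' and have $D$ answer every opponent leaf with the balancing leaf of the same component (always available, since a leaf-balanced component has an even number of free leaves) and every opponent central with a central. If $D$ keeps $\Phi=0$ after each of its own moves, then since a single opponent move changes $\Phi$ by at most one the terminal value of $\Phi$ is at most one; as $\Phi$ has the parity of the number of assigned leaves and that total is even, the terminal $\Phi$ is $0$, so every component is leaf-balanced and $s(G)=0$.

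The main obstacle is keeping $D$'s invariant $\Phi=0$ intact. Because there are exactly $E$ central vertices, their parity forces one of them to be unpaired, so $D$ can be compelled to ``initiate'' a leaf move — when the opponent spends the lone central while $D$ has none left to pass with — momentarily raising $\Phi$ to $1$. The heart of the proof is to show that the resulting all-leaf endgame still favors the balancer: no lingering unit of imbalance can be protected, since $D$ may always play into the offending component, so $D$ steers $\Phi$ back to $0$ and the parity count finishes the job. I expect to settle this either by the potential/parity bookkeeping above or, in the spirit of Theorem~\ref{thm2}, by an induction in which each canceled opposite-leaf pair deletes two leaves by SG-equivalence and each resolved central removes a component, bottoming out at a disjoint union of copies of $S_2$, which is a draw.
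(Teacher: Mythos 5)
Your reformulation (if at least one component has an odd number of leaves, the player who moves last wins; otherwise the game is a draw) is correct, and your matching strategy settles cases (2) and (3) completely: the within-component leaf pairs, the key pairs $(c_j,\ell_j^\ast)$ in the odd-leaf components, and the cross-pairing of the $E$ even-component centrals (with the leftover central serving as an opening move exactly when $E$ is odd, i.e., exactly when the claimed winner is Player 1) force $s(G)=\epsilon\cdot O$ against any play by the opponent. Since the paper leaves the proof of this theorem to the reader with only a pointer to the mirroring/SG-equivalence ideas of Thm.~\ref{thm2}, your explicit global pairing is, for the win cases, a genuine and complete argument rather than a restatement of the paper's.

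The gap is in case (1). Your matching shows the last mover forces $s(G)=0$, hence the other player $D$ cannot win; but a draw also requires that $D$ cannot \emph{lose}, and that half is not proved. The potential argument is circular at the decisive point: the conclusion ``terminal $\Phi\le 1$'' is derived from the hypothesis that $D$ restores $\Phi=0$ after each of its own moves, yet you then concede that $D$ can be forced to initiate an imbalance (all $E$ centrals spent, every component balanced, $D$ to move), after which the invariant is dead: $W$ can push $\Phi$ to $2$, $D$ can only return it to $1$, and since $W$ owns the final move the terminal position follows a move of $W$'s, so the parity of assigned leaves only yields terminal $\Phi\in\{0,2\}$ --- which permits $s(G)=\pm 2$. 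The assertion that ``no lingering unit of imbalance can be protected'' is exactly what needs proof. What actually governs this endgame is score bookkeeping you never carry out: once all centrals are assigned, every leaf move banks one point of the \emph{mover's} choosing (the central's value is known), so the all-leaf endgame contributes $0$ or $\pm1$ according to the parity of the number of leaves remaining when the last central is spent, and one must show that $D$ controls, or is protected by, that parity together with the score banked beforehand. Your fallback induction faces the same crux: in Thm.~\ref{thm2} the paper can argue that neither player answers a leaf with the same sign, but with several components a player may profitably switch components instead of answering, and ruling that out for $W$ is the content of the theorem, not something that follows from ``bottoming out at copies of $S_2$.'' Until one of these analyses is completed, case (1) is only half-proved.
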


\subsection{Bi- and tripartite graphs}

As mentioned in the introduction, graphs are oftentimes used to represent social networks. When the connections in this network occur between two different kinds of things, like users and events, the resulting graphs are called \textbf{bipartite}.  The \textbf{complete bipartite graph} $K_{m,n}$ ($m,n \geq 1$) is the graph consisting of $m+n$ vertices that can be partitioned into two nonempty sets $V_m$ and $V_n$, called the \textbf{partitioning sets} of $K_{m,n}$, where every vertex in $V_m$ is adjacent to each vertex in $V_n$ and no vertex in a partitioning set is adjacent to any other vertex in the same partitioning set. 

The following lemma is critical to this section's main result, Thm. \ref{bipartitetheorem}. Its proof is left to the reader.

\begin{lem} \label{bipartitelemma}
 When the Sign Game is played on $G = K_{m,n}$, with partitioning sets $V_m$ and $V_n$, and exactly $a$ vertices of $V_m$ and $b$ vertices of $V_n$ are assigned a value of $+1$, then $s(G) = (2a-m)(2b-n)$.
\end{lem}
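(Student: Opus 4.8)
The plan is to exploit the single structural feature that distinguishes a bipartite graph: in $K_{m,n}$ every edge has exactly one endpoint in $V_m$ and the other in $V_n$, so the edge set is precisely $\{\, uw : u \in V_m,\ w \in V_n \,\}$. First I would record the value assigned to each vertex, writing $x_u \in \{+1,-1\}$ for the value of vertex $u$. Since the score of an edge is the product of its endpoint values and $s(G)$ is the sum of all edge scores, this immediately gives
$$ s(G) \;=\; \sum_{u \in V_m} \sum_{w \in V_n} x_u\, x_w. $$

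The crucial step is then to factor this double sum. Holding $u$ fixed and summing over $w$, then summing over $u$, the distributive law collapses the double sum into a product of two independent single sums, namely
$$ s(G) \;=\; \Bigl(\sum_{u \in V_m} x_u\Bigr)\Bigl(\sum_{w \in V_n} x_w\Bigr). $$
This factorization is the heart of the argument and is exactly what makes the game on complete bipartite graphs tractable: the score depends only on the two vertex-sum totals, not on the detailed pairing of endpoints.

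Finally I would evaluate each factor by a direct count. Exactly $a$ of the $m$ vertices of $V_m$ carry $+1$ and the remaining $m-a$ carry $-1$, so the first factor equals $a-(m-a)=2a-m$; identically, the second factor equals $b-(n-b)=2b-n$. Multiplying yields $s(G)=(2a-m)(2b-n)$, as claimed. I do not expect any genuine obstacle here---the only point needing care is the justification of the factorization, which follows at once from the bilinearity of the product over the two disjoint vertex sets.
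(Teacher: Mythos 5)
Your proof is correct. Note that the paper does not actually supply an argument for this lemma---it is explicitly left to the reader---so there is no written proof to match against; but your factorization route is a clean way to discharge it. The implicit ``reader's proof'' suggested by the surrounding text is a direct count: the positive edges are those joining same-signed vertices, giving $ab+(m-a)(n-b)$ of them, the negative edges number $a(n-b)+(m-a)b$, and the difference simplifies to $(2a-m)(2b-n)$. Your approach instead writes $s(G)=\sum_{u\in V_m}\sum_{w\in V_n}x_u x_w$ and factors it as $\bigl(\sum_{u\in V_m}x_u\bigr)\bigl(\sum_{w\in V_n}x_w\bigr)$, then evaluates each factor as $2a-m$ and $2b-n$. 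The two computations are of comparable length, but yours buys something conceptual that the bare count does not: it shows the score depends only on the two sign-imbalance totals of the partitioning sets, which is precisely the structure exploited in the proof of Theorem \ref{bipartitetheorem} (Player P needs both imbalances to have the same sign, Player N opposite signs), and it generalizes immediately to complete $k$-partite graphs, where the score becomes the sum over pairs of parts of products of imbalances---relevant to the paper's concluding conjecture. The one hypothesis worth stating explicitly is that the game has been played to completion, so every vertex carries a value $x_u\in\{+1,-1\}$; your argument uses this when asserting $|V_m|=a+(m-a)$ split by sign, and it is how the lemma is intended.
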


Note that the star graph $S_n$ is a complete bipartite graph of the form $K_{1,n}$. And just as in Thm. \ref{thm2}, Player 2 will win or force a draw.

\begin{thm} \label{bipartitetheorem}
When the Sign Game is played on $K_{m,n}$, Player 2 wins when both $m$ and $n$ are odd and forces a draw when at least one of $m$ or $n$ is even.
\end{thm}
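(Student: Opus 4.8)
The plan is to reduce everything to Lemma \ref{bipartitelemma}, which expresses the score as the product $s(G)=(2a-m)(2b-n)$, where $a$ and $b$ count the vertices assigned $+1$ in $V_m$ and $V_n$. The factor $2a-m$ is positive, zero, or negative according as $+1$ is the strict majority, an exact tie, or the strict minority among the values in $V_m$, and similarly for $2b-n$ in $V_n$. Thus the whole game is about controlling the \emph{signs} of the two factors: $s(G)>0$ exactly when the two majorities agree, $s(G)<0$ when they disagree, and $s(G)=0$ only when one partitioning set is split evenly, which can happen only if that set has even size. Every strategy below is a pairing (mirroring) strategy in which the responding player fixes a matching on the vertices in advance and always answers the opponent on the matched vertex; the usual invariant, that after each response every matched pair is either untouched or completed, guarantees the response is always legal, so the only real choices are the matching and the response signs.

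First I would clear the two cases in which $m+n$ is even, so that Player 2 responds. If $m$ and $n$ are both even, I match $V_m$ to itself in $\tfrac m2$ pairs and $V_n$ to itself in $\tfrac n2$ pairs; answering every move with the opposite sign on its partner splits every pair evenly, so $a=\tfrac m2$, $b=\tfrac n2$, and $s(G)=0$, a forced draw. If $m$ and $n$ are both odd, I instead match $V_m$ to itself except for one leftover vertex $v^\ast$, match $V_n$ to itself except for one leftover $w^\ast$, and add the single cross pair $\{v^\ast,w^\ast\}$. Opposite-sign answers split every internal pair evenly, which forces $2a-m=\operatorname{sgn}(v^\ast)$ and $2b-n=\operatorname{sgn}(w^\ast)$, so $s(G)=\operatorname{sgn}(v^\ast)\operatorname{sgn}(w^\ast)$. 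Since Player 1 must be the one to open the cross pair, Player 2 chooses the sign of the second of these two vertices and so dictates the product: equal signs (hence $s(G)>0$) if Player 2 is Player P, opposite signs (hence $s(G)<0$) if Player 2 is Player N. Either way Player 2 wins.

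The remaining case, $m$ even and $n$ odd (the reverse case is identical), is where I expect the real difficulty: now $m+n$ is odd, Player 1 moves last, and $2b-n$ is always odd and hence nonzero, so $s(G)=0$ requires $a=\tfrac m2$ and no pairing can make the score zero outright. I would prove the draw by showing that neither player can win. That Player 2 cannot win follows from a strategy for Player 1: open by playing the lone odd vertex of $V_n$, then run the opposite-sign pairing on the remaining even set of vertices with $V_m$ matched to itself; this splits $V_m$ evenly, forcing $a=\tfrac m2$ and $s(G)=0$. The harder direction is that Player 1 cannot win, that is, that Player 2 can force $s(G)\le 0$ when Player 2 is Player N, and symmetrically $s(G)\ge 0$ when Player 2 is Player P.

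For that direction I would have Player 2 keep $V_m$ balanced by matching it to itself and answering every $V_m$-move with the opposite sign, while answering every $V_n$-move with a move in $V_n$. The crucial bookkeeping, and the crux of the whole theorem, is that the invariant forces the number of unplayed $V_n$-vertices to be odd immediately after each of Player 2's moves; hence a $V_n$ response is always available and Player 2 is never forced to break a $V_m$ pair, \emph{except} at the moment Player 1 exhausts $V_n$, at which point it is Player 2's turn with an even number of $V_m$-vertices remaining. From there the value of $2b-n$ is already fixed, and Player 2 plays each of its remaining moves with the sign $-\operatorname{sgn}(2b-n)$, which caps the surplus of that sign in $V_m$ so that $\operatorname{sgn}(2a-m)$ either vanishes or opposes $\operatorname{sgn}(2b-n)$, giving $s(G)\le 0$. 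Pinning down this parity invariant, and verifying that the endgame always leaves Player 2 to move first in $V_m$, is the step I expect to demand the most care.
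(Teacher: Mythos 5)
Your proposal is correct, and while it rests on the same two pillars as the paper---Lemma \ref{bipartitelemma} and mirroring strategies---it is organized along genuinely different lines in the draw cases. Your both-odd argument is the paper's proof in static form: the paper has Player 2 mirror in the same partitioning set with the opposite sign until Player 1 closes out one set, leaving an SG-equivalent $K_{1,n}$ endgame; your fixed matching with the explicit cross pair $\{v^\ast,w^\ast\}$ is the same mechanism with the endgame built into the pairing. The divergence is in the case where at least one of $m,n$ is even: the paper decomposes the draw by \emph{player role}, giving Player N a cross-set opposite-sign mirror (which keeps the sum of the two signed surpluses at zero, so $s(G)=-(2a-m)^2\le 0$ and P can never win) and giving Player P a balancing strategy confined to the even partitioning set to secure $s(G)=0$; you decompose it by \emph{move order}, giving Player 1 a drawing strategy (open the odd set, then pair everything) and Player 2 a non-losing strategy (pair the even set, track the parity of the odd set, and cap the surplus with fixed-sign endgame moves once the odd set is exhausted). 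Your version buys rigor exactly where the paper is loosest: the paper's cross-set mirror for N tacitly assumes a response is always available in the opposite set, which can fail when that set is exhausted first (already on $K_{4,2}$ if P plays only in $V_m$), and it does not distinguish whether N moves first or second; your parity invariant on unplayed $V_n$-vertices and the even-remainder endgame handle precisely this legality issue, at the cost of a longer, more bookkeeping-heavy argument, whereas the paper's P/N decomposition is shorter and gives a cleaner sign-theoretic intuition for why P can never beat the mirror.
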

\begin{proof}
Suppose first that both $m$ and $n$ are odd. By mirroring Player 1 and playing in the same partitioning set as Player 1 just played in, but assigning a vertex the opposite value of what Player 1 just played, Player 2 will guarantee that, because $m$ and $n$ are both odd, that Player 1 will ultimately assign the last unassigned vertex in one of the partitioning sets. When this happens, Player 2 will assign a vertex in the other partitioning set. When Player 2 is Player P, they will assign this vertex the same sign Player N assigned on the previous turn. When Player N is Player 2, they assign the vertex the opposite sign as to what Player P just assigned.

Such a strategy results in an SG-equivalent complete bipartite graph $K_{1,n}$. Because $n$ is odd and Player 2 assigns $\lceil \frac{n}{2} \rceil$ vertices in the partitioning set of size $n$, Player 2 is guaranteed victory.

Now suppose that at least one of $m$ or $n$ is even. In the notation of Lem. \ref{bipartitelemma}, Player P wins if both $2a > m$ and $2b > n$, and, Player N wins if $2a > m$ and $2b < n$ (or vice versa). But note that when two vertices in the same partitioning set are assigned opposite values, the resulting graph is SG-equivalent to the graph with those two vertices and incident edges removed.

The other key observation is that Player P banks a point only when two vertices in opposite partitioning sets have the same value; Player N banks a point if they have opposite values. This means Player N will mirror Player P by always assigning a vertex in the opposite partitioning set the opposite assignment. 

This mirroring strategy means that Player P cannot win and will thus play for a draw. Player P can choose to play successive turns in only a partitioning set with an even number of vertices, guaranteeing exactly half these vertices are assigned $+1$ and half assigned $-1$ (dependent upon whether Player P or Player N plays first, and, who ultimately assigns the first vertex in the even-sized partitioning set). In this case, then, the resulting score is $0$ and the game a draw.

\end{proof}

The notion of bipartite graphs can be generalized to graphs whose sets are partitioned into $k$ partitioning sets. Such a graph is called a \textbf{$k$-partite graph} (when $k=3$, the graph is called \textbf{tripartite}), and when there is an edge between every pair of vertices from different partitioning sets, the graph is called a \textbf{complete $k$-partite graph}, denoted, in the case of a complete tripartite graph with partitioning sets of size $l$, $m$, and $n$, as $K_{l,m,n}$. The result of Thm. \ref{bipartitetheorem} is conjectured to generalize to the following.\\

\noindent \textbf{Conjecture} \textit{The result of the Sign Game being played on $K_{l,m,n}$ is as follows.}

\begin{enumerate}
    \item \textit{If all of $l$, $m$, and $n$ are odd, Player N wins.}
    \item \textit{If exactly one of $l$, $m$, or $n$ is even, Player 2 wins.}
    \item \textit{If exactly two or three of $l$, $m$, and $n$ are even, the result is a draw.}
\end{enumerate}

There is likely a generalization of Thm. \ref{bipartitetheorem} and the above conjecture to complete $k$-partite graphs, a question itself worth investigating.

\subsection{Path graphs}

Another type of tree graph is the type as in Fig. \ref{figpath1}. This is an example of a \textbf{path graph}. In general, the path graph $P_n$ is a graph with $n$ vertices that can be ordered $v_1$, $v_2$, $\ldots$, $v_n$ and $n-1$ edges $\left(v_i,v_{i+1}\right)$, for $i = 1,2,\ldots,n-1$. The \textbf{length} of $P_n$ is said to be $n$.

\begin{figure}[htb] 
\centering
\begin{tikzpicture} 
\draw (0,0)--(1,0)--(2,0)--(3,0);
\draw[black,fill=black] (0,0) circle (.5ex);
\draw[black,fill=black] (1,0) circle (.5ex);
\draw[black,fill=black] (2,0) circle (.5ex);
\draw[black,fill=black] (3,0) circle (.5ex);

\end{tikzpicture}
\caption{The graph $P_4$}
\label{figpath1}
\end{figure}
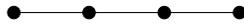

The result of playing the Sign Game on $P_n$ is similar to when it is played on $S_n$, with the winner dependent only upon the parity of $n$. 

\begin{thm} \label{thm3}
When played on $P_n$, the Sign Game is a draw when $n$ is odd and Player 2 wins (with a score of $1$ point) when $n$ is even.
\end{thm}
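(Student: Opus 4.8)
The plan is to treat the two parities separately but with a single underlying device: explicit pairing (mirroring) strategies, one for each player, whose guarantees combine to pin down the exact score. First I would record the parity bookkeeping. Writing $x_i \in \{+1,-1\}$ for the value at $v_i$ and $e_i = x_i x_{i+1}$ for the $i$-th edge, the score is $s = \sum_{i=1}^{n-1} e_i$. When $n$ is even there are $n-1$ (an odd number of) edges, so $s$ is a sum of an odd number of $\pm 1$'s and is therefore odd; in particular $s \neq 0$, a draw is impossible, and someone must win. When $n$ is odd, $n-1$ is even, $s$ is even, and $s = 0$ is the value I will show optimal play produces.

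For the even case I would first exhibit Player 2's winning strategy via the pairing $(v_1,v_2),(v_3,v_4),\ldots,(v_{n-1},v_n)$. Because $n$ is even this is a perfect pairing, and Player 2 simply answers each move of Player 1 inside the same block, playing its remaining vertex. This lets Player 2 choose the value of each ``internal'' edge $e_1,e_3,\ldots,e_{n-1}$ (the $n/2$ odd-indexed edges): if Player 2 is P they match signs to force every such edge to $+1$, and if N they oppose signs to force $-1$. Since the remaining $n/2-1$ edges contribute at least $-(n/2-1)$ (resp. at most $+(n/2-1)$), this already gives $s \ge 1$ (resp. $s \le -1$), so Player 2 wins. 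To see the margin is exactly $1$, I would give Player 1 the complementary pairing $(v_2,v_3),(v_4,v_5),\ldots,(v_{n-2},v_{n-1})$, which controls the even-indexed edges: the loser uses it to force those $n/2-1$ edges to the unfavorable sign, while the other $n/2$ edges can each contribute at most $1$ toward Player 2. The two bounds meet, forcing $s = \pm 1$.

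For the odd case the same two pairings reappear, but now each controls $(n-1)/2$ edges and leaves the complementary $(n-1)/2$ edges uncontrolled. Player P, by forcing its controlled edges to $+1$, guarantees $s \ge (n-1)/2 - (n-1)/2 = 0$, and Player N symmetrically guarantees $s \le 0$. Since each player can prevent the other from reaching a strictly favorable score, optimal play yields the draw $s=0$. I would present the two strategies as mirror images under the global sign flip $x_i \mapsto -x_i$, which negates $s$ and swaps the roles of P and N, so that only one of them needs a detailed verification.

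The hard part will be making the mirroring fully rigorous precisely where the clean block structure fails: when $n$ is odd (or when the complementary pairing is used) one or two endpoints are left unpaired, and when the mirroring player is Player 1 there is an initial ``free'' move before any answering can begin. I would absorb the free move by having the first mover open on an endpoint, and I would handle a leftover endpoint by noting that it lies on a single edge: as soon as the opponent plays it, the responder answers on its unique neighbor, fixing that pendant edge to the desired sign and reducing the position, via SG-equivalence, to the same game on $P_{n-2}$. Verifying that this reduction never forces the responder to surrender control of an edge it needs—which would break the tight margin in the even case and the draw in the odd case—is the step I expect to require the most care; and the exact-margin (upper-bound) half of the even case, which genuinely relies on the complementary pairing rather than merely on Player 2's winning pairing, is the most likely place for a gap.
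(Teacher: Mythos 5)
The one fully sound piece of your proposal is the even-$n$ winning strategy for Player 2, and it is worth noting that it is genuinely different from the paper's argument. The pairing $(v_1,v_2),(v_3,v_4),\ldots,(v_{n-1},v_n)$ is a perfect pairing closed under Player 2's responses: Player 2 completes every odd-indexed edge and sets it favorably, the $n/2-1$ even-indexed edges cost at most $n/2-1$, and so the score is at least $1$ in Player 2's favor (and odd, by your parity remark). That is a complete, self-contained proof of the ``Player 2 wins'' half, whereas the paper instead inducts on SG-equivalent splittings of the path at the first vertex played (as in Fig.~\ref{figpathSG}); for that half your pairing bound is arguably cleaner. One small error to fix along the way: the global flip $x_i \mapsto -x_i$ does \emph{not} negate $s$ --- it fixes every edge value, since $(-x_i)(-x_{i+1}) = x_i x_{i+1}$. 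The involution you want on a path is the alternate flip $x_i \mapsto (-1)^i x_i$, which negates each edge; this works because paths are bipartite, and it does restore your P/N duality.

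The genuine gap is exactly where you predicted, and it is real: every part of the argument that uses a pairing with unpaired endpoints --- the exact margin of $1$ for even $n$, and both halves of the odd-$n$ draw when the pairing player moves second --- does not go through as written. Your pendant reduction is false as stated. If the opponent takes the unpaired endpoint $v_n$ and you answer at $v_{n-1}$, you do bank $e_{n-1}$, but the edge $(v_{n-2},v_{n-1})$ is still live, so the resulting position is SG-equivalent to $P_{n-1}$ with one endpoint pre-assigned and the opponent to move --- not to a fresh $P_{n-2}$ --- and the move-parity relative to the number of unassigned vertices has flipped. Worse, by playing $v_{n-1}$ you have opened your own block $(v_{n-2},v_{n-1})$: the opponent can now take $v_{n-2}$ unfavorably, stealing an edge your pairing was supposed to control, and force a cascade down the path in which you are the \emph{opener} of every remaining block. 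In the straight-line cascade each exchange trades a stolen block edge against a banked cross edge and the total nets out (score $0$ in the odd case, margin $1$ in the even case), but nothing in your write-up rules out a profitable mid-cascade deviation by the opponent. Closing this requires strengthening the induction hypothesis to cover the class of paths with one or both endpoints pre-assigned, together with the identity of the player to move --- precisely the boundary positions the paper's proof confronts through its SG-equivalent decomposition and its separate endpoint-first-move analysis in the odd case. Until that strengthened statement is formulated and proved, the draw for odd $n$ and the claim that Player 2's winning score is exactly $1$ remain unestablished in your proposal.
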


The proof of the result is inductive, and to employ the induction, we observe that the graph $P_n$ with a single degree $2$ vertex $v_i$ assigned a value is SG-equivalent to the disjoint union of $P_{i}$ (with $v_i$ of this graph inheriting the assignment from the original graph) and $P_{n-i+1}$ (with $v_1$ of this graph inheriting the assignment from the original graph). See Fig. \ref{figpathSG}. 

\begin{figure}[htb]  
\centering  

\subfigure[$G_1$]  
{  
\begin{tikzpicture}[]  

\draw[black,fill=black] (1,0) circle (.5ex);
\draw[black,fill=black] (2,0) circle (.5ex);
\draw (3,0) circle (.22cm);
\node (20) at (3,0) {$+$};
\draw[black,fill=black] (4,0) circle (.5ex);

\draw (1,0)--(2.77,0);
\draw (3.23,0)--(4,0);

\end{tikzpicture}  

}  
\hspace{50pt}
\subfigure[$G_2$]  
{  

\begin{tikzpicture} 

\draw[black,fill=black] (1,0) circle (.5ex);
\draw[black,fill=black] (2,0) circle (.5ex);
\draw (3,0) circle (.22cm);
\node (20) at (3,0) {$+$};

\draw (4,0) circle (.22cm);
\node (20) at (4,0) {$+$};
\draw[black,fill=black] (5,0) circle (.5ex);

\draw (1,0)--(2.77,0);
\draw (4.23,0)--(5,0);

\end{tikzpicture}  

}

\caption{SG-equivalent graphs: path and union of paths}
\label{figpathSG}
\end{figure}
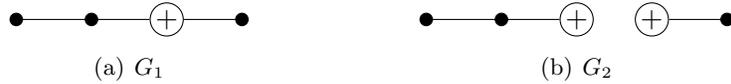

With this, we proceed in proving Thm. \ref{thm3}. 
\begin{proof}
    We proceed via induction. As $P_2 =S_1$ $P_3 = S_2$, the base cases follow from Thm. \ref{thm2}, we consider playing the Sign Game on $P_n$ for an arbitrary $n$. For $n$ even, any first vertex assignment (other than $v_1$ or $v_n$) will result in an SG-equivalent graph as in Fig. \ref{figpathSG}, where one of the components will have even length and the other odd. 

    In this case, Player 2 will choose to play on the component of even length, and then will subsequently mirror Player 1 by playing on the same component as Player 1, yielding, by induction, a score of zero being contributed from the component of odd length and a score in their favor on the component of even length.

    Being strategic, then, Player 1 will initially choose to play on $v_1$ or $v_n$, but Player 2 can choose to simply play on the adjacent vertex. The resulting two component graph consists of a path of length 2, banking a point for Player 2, and a path of odd length, which inductively will contribute zero points to the final score. Thus, in either case, when $n$ is even, Player 2 wins.

    When $n$ is odd and the first play is on a degree $2$ vertex, after the first play, the graph is SG-equivalent to a two component graph, each component a path graph, where the lengths of each component have matching parity. If both are of odd length, then each will contribute zero to the final score. If both are of even length, each player can be the second to play on one of the components. As each plays strategically, they will choose to then mirror each other so that each component will bank a single point for each player, resulting in a draw.

    Lastly, we need only consider when $n$ is odd and the first play is on a degree $1$ vertex. Without loss of generality, assume the first play is on $v_1$. We view the resulting graph, though only one component, as a nondisjoint union of two path components. The first, the path $P_2$ consisting of the two vertices $v_1$ and $v_2$, and the path $P_{n-1}$ containing the vertices $v_2$ through $v_n$. If Player 2 chooses to assign $v_2$, while they bank $1$ point from the $P_2$ component, they allow Player 1 to be the second player to play on the $P_{n-1}$ component, a path of even length, and consequently allow Player 1 to bank $1$ point. Should Player 2 assign any vertex other than $v_2$, Player 1 would subsequently assign $v_2$, banking $1$ point from each of the path components. Thus, Player 2's strategic move is to guarantee the draw.
\end{proof}

It is interesting to note the similarity in results for all the families of graphs investigated thus far, particularly from the perspective of Player P. The next family of graphs, cycles, exhibits a very different type of result, including cases where, for the first time, Player P is guaranteed victory. A few observations about playing the Sign Game on $P_5$ are needed for the forthcoming investigation into cycles. We state them in the lemma below, leaving its proof to the reader.

\begin{lem} \label{p5lemma}
    Suppose the Sign Game is played on $P_5$.

    \begin{enumerate}
    \item If the first two players label $v_1$ and $v_5$ on their first moves with opposite signs, then Player 1, who labels two of the remaining three vertices, can guarantee a victory. If the first two players label $v_1$ and $v_5$ on their first moves with the same sign, the Player 2 can force a draw.

    \item Up to symmetry and duality (that is, interchanging all $+1$ and $-1$ values), there are three possible assignments to the vertices of $P_5$ that result in a positive game score (illustrated as $+++++$, $++++-$, and $+++--$), three that result in a negative score ($++-+-$, $+-++-$, and $+-+-+$), and all others result in a score of $0$.
\end{enumerate}
\end{lem}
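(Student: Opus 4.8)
The plan is to reduce both parts to finite computations by writing the score explicitly. Label the vertices $v_1,\dots,v_5$ with values $x_1,\dots,x_5\in\{+1,-1\}$, so that
\[
s = x_1x_2 + x_2x_3 + x_3x_4 + x_4x_5 .
\]
Both operations under which part (2) reduces assignments, namely the reflection $x_i\mapsto x_{6-i}$ and the duality $x_i\mapsto -x_i$, leave every edge product (hence $s$) unchanged, and together they generate a group of order four; so it suffices to evaluate $s$ on one representative of each orbit.

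For part (2) the cleanest bookkeeping passes to the edge signs $e_i = x_ix_{i+1}$ for $i=1,2,3,4$. Every choice of $(e_1,e_2,e_3,e_4)\in\{\pm1\}^4$ arises from exactly two vertex assignments (a dual pair obtained by the two choices of $x_1$), and $s = e_1+e_2+e_3+e_4$, so with $k$ negative edges one gets $s = 4-2k$. Sorting by $k$ gives $\binom{4}{k} = 1,4,6,4,1$ edge-patterns with scores $4,2,0,-2,-4$. Translating back to vertex labels and collapsing by the order-four symmetry group then yields exactly one orbit with $s=4$ (represented by $+++++$), two with $s=2$ ($++++-$ and $+++--$), one with $s=-4$ ($+-+-+$), and two with $s=-2$ ($++-+-$ and $+-++-$), with all remaining assignments scoring $0$. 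This is a routine finite check, further streamlined by noting that the alternating flip $x_i\mapsto(-1)^ix_i$ sends $s\mapsto -s$ and so matches the positive orbits with the negative ones.

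For part (1) I would substitute the endpoint values and read off the game tree on $v_2,v_3,v_4$, keeping in mind that after $v_1,v_5$ are played the remaining turns are Player 1 (turn 3), Player 2 (turn 4), Player 1 (turn 5), so Player 1 assigns two of the three middle vertices and moves last. With opposite endpoints, say $x_1=+1,\,x_5=-1$, the score factors as $s = x_2(1+x_3) + x_4(x_3-1)$, which equals $2x_2$ when $x_3=+1$ and $-2x_4$ when $x_3=-1$; in particular $s=\pm 2$, so no draw is possible. Player 1's winning strategy is to open (turn 3) by setting $x_2$ to the sign of their target, after which a short case check on Player 2's reply shows Player 1 realizes that sign on turn 5: if Player 2 plays $x_3=+1$ the score is already $2x_2$; if Player 2 plays $x_3=-1$, Player 1 finishes with the matching $x_4$; and if Player 2 touches $x_4$, Player 1 finishes with $x_3=+1$. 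With equal endpoints, say $x_1=x_5=+1$, the score factors as $s=(1+x_3)(x_2+x_4)$, so Player 1 can force at least a draw immediately by playing $x_3=-1$. To show Player 2 can likewise hold the draw, I would argue: if Player 1 opens with a leaf, Player 2 answers $x_3=-1$ and $s=0$; if Player 1 opens with $x_3=-1$, then $s=0$; and if Player 1 opens with $x_3=+1$, reducing to $s=2(x_2+x_4)$, Player 2 sets the available one of $x_2,x_4$ to the sign \emph{opposing} Player 1's goal, after which Player 1's last move can reach only $s=0$ or a score against Player 1, so Player 1 settles for the draw.

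The routine-but-error-prone part is the turn bookkeeping in part (1): the whole point of the equal-endpoint case is that Player 1's last-move advantage is neutralized precisely because Player 2 can ``poison'' the single relevant middle vertex once $x_3=+1$ is committed, and establishing ``Player 2 can force a draw'' means pairing this observation with Player 1's own drawing move $x_3=-1$ to conclude the optimal outcome is exactly $0$. Part (2) presents no conceptual difficulty beyond organizing the symmetry reduction carefully so the orbit counts come out to the stated three and three.
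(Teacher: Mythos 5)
Your proposal is correct, and there is nothing in the paper to compare it against: the paper explicitly leaves the proof of Lem.~\ref{p5lemma} to the reader, so your argument fills a genuine gap rather than duplicating or diverging from a printed proof. Both halves check out. For part (2), the pass to edge signs $e_i = x_ix_{i+1}$ with $s = 4-2k$ is a clean way to organize the $32$-case check: the lift is indeed $2$-to-$1$, the symmetry group generated by reflection and duality has order four, and the orbit census is right (the $s=4$ and $s=-4$ orbits have size $2$, being reflection-invariant; the four $s=\pm 2$ orbits have size $4$; that accounts for $20$ assignments, leaving $12$ with score $0$, matching $\binom{4}{2}\cdot 2$). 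For part (1), the factorizations $s = x_2(1+x_3)+x_4(x_3-1)$ and $s = (1+x_3)(x_2+x_4)$ are correct and do the real work: the first shows $s = \pm 2$ always in the opposite-endpoint case (so the game cannot be drawn there), and your three-case check of Player 2's replies is exhaustive since Player 1 holds turns $3$ and $5$; the second makes both drawing strategies transparent, and you correctly handle the nuance that ``Player 2 can force a draw'' must be paired with Player 1's own drawing move $x_3=-1$ to pin the optimal outcome at exactly $0$. Your implicit reduction to $x_1=+1,\,x_5=-1$ (resp.\ $x_1=x_5=+1$) is legitimate because duality preserves every edge product and reflection preserves $s$, and the strategies you give work symmetrically whether Player 1 is Player P or Player N. Two cosmetic slips, neither affecting validity: in $P_5$ the leaves are $v_1$ and $v_5$, already assigned, so ``if Player 1 opens with a leaf'' should read ``opens with $v_2$ or $v_4$''; and ``finishes with the matching $x_4$'' would be clearer as ``chooses $x_4$ so that $-2x_4$ has the target sign.''
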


\subsection{Cycles}

A \textbf{cycle graph} $C_n$ is a graph with $n$ vertices ($n \geq 3$) whose vertices can be ordered $v_1$, $v_2$, $\ldots$, $v_n$ with edges $\left(v_i,v_{i+1}\right)$, for $i = 1,2,\ldots,n-1$ and $\left(v_n,v_1\right)$. Visually, $C_n$ is often drawn as an $n$-gon, with vertices at every corner. Of the graphs investigated in this paper, one subset of these are the first in which Player P has a winning strategy.

Before proceeding with the complete classification of the Sign Game on $C_n$, we introduce some useful terminology. Once the first vertex (for example, $v_1$) of $C_n$ has been assigned a value, the graph is SG-equivalent to $P_{n+1}$ with $v_1$ and $v_{n+1}$ inheriting the same value as was assigned $v_1$ on $C_n$. See Fig. \ref{cycleSG}.

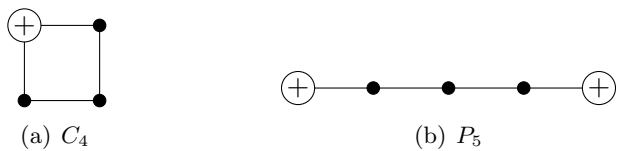
\begin{figure}[htb]  
\centering  

\subfigure[$C_4$]  
{  
\begin{tikzpicture}[]  

\draw[black,fill=black] (1,0) circle (.5ex);
\draw[black,fill=black] (2,0) circle (.5ex);
\draw[black,fill=black] (2,1) circle (.5ex);

\draw (1,1) circle (.22cm);
\node (20) at (1,1) {$+$};

\draw (1,.78)--(1,0)--(2,0)--(2,1)--(1.22,1);

\end{tikzpicture}  

}  
\hspace{50pt}
\subfigure[$P_5$]  
{  

\begin{tikzpicture} 

\draw[black,fill=black] (2,0) circle (.5ex);
\draw[black,fill=black] (3,0) circle (.5ex);
\draw[black,fill=black] (4,0) circle (.5ex);
\draw (1,0) circle (.22cm);
\node (20) at (1,0) {$+$};
\draw (5,0) circle (.22cm);
\node (20) at (5,0) {$+$};

\draw (1.22,0)--(4.78,0);

\end{tikzpicture}  

}

\caption{SG-equivalent graphs: $C_4$ and $P_5$}
\label{cycleSG}
\end{figure}

For purposes we will see in the proof of Thm. \ref{cycletheorem}, we proceed to partition the set of edges of this path graph $P_{n+1}$ into partitioning sets of size four (referred to as the \textbf{$4$-segments} of $P_{n+1}$) and, if $n$ mod $4 = k > 0$, a single partitioning set of size $k$ (called the \textbf{$k$-segment} of $P_{n+1}$). The first $4$-segment contains edges $(v_1,v_2)$ through $(v_4,v_5)$, the second contains $(v_5,v_6)$ through $(v_8,v_9)$, etc. The $k$-segment contains edges $(v_{n+1-k},v_{n+2-k})$ through $(v_{n},v_{n+1})$. For each segment, whether it is a $4$-segment or the $k$-segment, we call the first and last vertex of the segment the \textbf{endvertices} of the segment, while the other vertices will be referred to as \textbf{interior} vertices of the segment.

This idea is illustrated in Fig. \ref{4segments}. If $v_1$ of $C_{11}$ has been assigned a $+1$, then this partiually labeled graph is SG-equivalent to $P_{12}$ where $v_1$ and $v_{12}$ inherit the $+1$ value. In Fig. \ref{4segments}, edges in the first $4$-segment are represented via solid lines, while those in the second $4$-segment are represented via dotted lines, and those in the $3$-segment are illustrated with dashed lines. Vertices $v_5$ and $v_9$, two of the endvertices, are larger to help visualize the segments.\\

\begin{figure}[htb]
\centering
\begin{tikzpicture}

\draw (1,0) circle (.22cm);
\node (00) at (1,0) {$+$};
\draw[black,fill=black] (2,0) circle (.5ex);
\draw[black,fill=black] (3,0) circle (.5ex);
\draw[black,fill=black] (4,0) circle (.5ex);
\draw[black,fill=black] (5,0) circle (1.2ex);
\draw[black,fill=black] (6,0) circle (.5ex);
\draw[black,fill=black] (7,0) circle (.5ex);
\draw[black,fill=black] (8,0) circle (.5ex);
\draw[black,fill=black] (9,0) circle (1.2ex);
\draw[black,fill=black] (10,0) circle (.5ex);
\draw[black,fill=black] (11,0) circle (.5ex);
\draw (12,0) circle (.22cm);
\node (00) at (12,0) {$+$};

\node (00) at (1,-.5) {$v_1$};
\node (00) at (5,-.5) {$v_5$};
\node (00) at (9,-.5) {$v_9$};
\node (00) at (12,-.5) {$v_{12}$};
\draw [solid] (1.23,0) -- (5,0);
\draw [dotted] (5,0) -- (9,0);
\draw [dashed] (9,0) -- (11.77,0);

\end{tikzpicture}
\caption{Two $4$-segments and one $3$-segment}
 \label{4segments}
\end{figure}
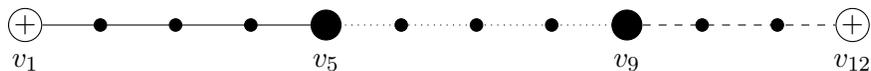

\begin{thm} \label{cycletheorem}
    If $n$ mod $4 =  k$, then the result of the Sign Game on $C_n$ is a draw when $k = 0$, Player P wins when $k = 1$, Player 2 wins when $k = 2$, and Player N wins when $k = 3$.
\end{thm}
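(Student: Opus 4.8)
The plan is to run the whole argument on a path obtained from $C_n$ by its opening move, reduce the outcome to a single sign-change count whose parity already forces the four-way split, and only then supply the game-theoretic strategies segment by segment.

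First I would reduce. After Player 1's opening move we may assume, by rotational symmetry and the $+1/-1$ duality, that $v_1$ has been assigned $+1$; by the SG-equivalence of Fig. \ref{cycleSG} the position is then the Sign Game on $P_{n+1}$ with $v_1$ and $v_{n+1}$ both fixed at $+1$ and Player 2 to move on the $n-1$ free interior vertices. Writing each edge value as $v_iv_{i+1}$ and letting $d$ be the number of indices $i$ with $v_i\neq v_{i+1}$ (the \emph{sign changes}), the final score is $s(C_n)=n-2d$; moreover, since $v_1=v_{n+1}$, the number of sign changes around the cycle is even, so $d$ is always even. This single observation does the bookkeeping for the whole theorem: with $d$ even and $s=n-2d$, a score of $0$ is attainable exactly when $n\equiv 0\pmod 4$ (take $d=n/2$), is impossible when $n\equiv 2\pmod 4$ (then $n/2$ is odd), and cannot occur for odd $n$. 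Computing the scores nearest $0$ in each class shows that Player P's best target is the largest even value of $d$ below $n/2$ while Player N's is the smallest even value above it; comparing how far each sits from the midpoint $n/2$ predicts the announced winner in every residue class (P favored for $k=1$, N for $k=3$, the tempo-holder for $k=2$, neither for $k=0$). The remaining work is to show these targets are actually forced.

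For the strategies I would use the $4$-segment decomposition of $P_{n+1}$ together with Lemma \ref{p5lemma}. The key building block is that a single $4$-segment is a copy of $P_5$ glued to its neighbors at its endvertices: by Lemma \ref{p5lemma}(1), if a segment ends with equal endvertices the player who answers inside it can force its four edges to contribute score $0$, i.e.\ exactly two sign changes, while if the endvertices are opposite the segment is worth $\pm 2$ and is won by whoever controls two of its three interior vertices. The plan is therefore an induction that peels off one $4$-segment at a time, reducing $C_n$ to $C_{n-4}$ within the same residue class: the designated player adopts a \emph{local mirroring} strategy, always responding in the same segment the opponent just played in, using Lemma \ref{p5lemma}(1) to neutralize each completed $4$-segment to exactly two sign changes with equal ($+1$) endvertices. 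Since the fixed value $v_1=+1$ propagates, every segment boundary stays $+1$ and consistency with $v_{n+1}=+1$ is automatic. The base cases are the four smallest cycles: $C_3=K_3$ is handled by Thm. \ref{thm1} and $C_4=K_{2,2}$ by Thm. \ref{bipartitetheorem}, while $C_5$ and $C_6$ are small enough to analyze directly from Lemma \ref{p5lemma} and the path results of Thm. \ref{thm3}. The leftover $k$-segment ($k=n\bmod 4$) is what supplies the decisive $\pm 1$ or $\pm 2$ and pins down the winner.

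The hard part will be the coupling between segments. Unlike a disjoint sum of games, adjacent $4$-segments share an endvertex, so a single move can change the ``endpoints equal versus opposite'' status that Lemma \ref{p5lemma} depends on for two segments at once, and the global alternation can hand the interior tempo of a segment to the wrong player. Making the local-mirroring strategy well defined—specifying exactly how the responder reacts when the opponent plays a shared boundary vertex versus an interior vertex, and checking that the responder never runs out of legal in-segment replies—is the real content. I expect to resolve this by processing the segments from $v_1$ outward, so that each boundary vertex is settled before its segment's interior is contested, thereby keeping the hypotheses of Lemma \ref{p5lemma} satisfied. The most delicate case is $k=2$, where no draw exists and the outcome is decided purely by which player retains the final tempo, so the tempo count must be tracked exactly to confirm that it is always the second player who wins.
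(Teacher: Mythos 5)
Your bookkeeping layer is correct and matches the paper's skeleton: the reduction of $C_n$ to $P_{n+1}$ with equal fixed endvertices is exactly Fig. \ref{cycleSG}, the $4$-segment decomposition and the use of Lem. \ref{p5lemma} are the paper's tools, and your formula $s=n-2d$ with $d$ even is a clean (and correct) way to see that a draw is arithmetically possible only when $4 \mid n$ — the paper leaves this implicit. Your base cases $C_3 = K_3$ via Thm. \ref{thm1} and $C_4 = K_{2,2}$ via Thm. \ref{bipartitetheorem} are also fine and slightly slicker than the paper's direct analysis. But the strategic core has a genuine gap, and you have located it yourself without filling it: the proposed resolution of the segment-coupling problem — ``processing the segments from $v_1$ outward, so that each boundary vertex is settled before its segment's interior is contested'' — is not a strategy, because one player controls only their own moves. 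The opponent can open play in the interior of any segment, or label a shared boundary vertex $-1$, at any time; consequently the claim that ``every segment boundary stays $+1$'' is unforceable, and with it the peeling induction $C_n \to C_{n-4}$ collapses (it also silently ignores how tempo, i.e.\ which player is ``Player 2,'' changes in the residual game). The paper avoids sequential processing entirely: since $v_1 = v_{n+1}$, the number of $4$-segments whose endvertices receive \emph{opposite} values is automatically even, so the neutralizing player uses a global pairing — whenever the opponent is first to play an interior vertex of such a segment (worth $\pm 2$ by Lem. \ref{p5lemma}), they answer by being first into the interior of \emph{another} such segment, and the two contributions cancel; segments with equal endvertices are held to zero by answering locally. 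No control over boundary values or move order is needed, only this parity.

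The second missing ingredient is the mechanism by which the designated player secures the leftover $k$-segment when $k = 1, 2, 3$; your proposal asserts it ``supplies the decisive $\pm 1$ or $\pm 2$'' and that ``the tempo count must be tracked exactly,'' but that is the statement of the difficulty, not its resolution. The paper's device is a committal \emph{second} move: the winning player immediately assigns a specific vertex ($v_{n-4}$ or $v_6$ when $k=1$; $v_{n-5}$ when $k=2$; $v_4$ or $v_{n-2}$ when $k=3$) the \emph{same} value as $v_1$, which by SG-equivalence splits the position into a short path with equal labeled ends ($P_6$, playing like $C_5$; $P_7$, SG-equivalent to $C_6$; or the $3$-segment) that they win by the explicit $C_5$/$C_6$ mirroring strategies, plus a path of length a multiple of $4$ with equal ends, which is neutralized by the $k=0$ pairing argument above. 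Without an analogous early committal move, your plan gives the opponent every opportunity to contest the odd segment, and in the $k=2$ case — where, as your own score formula shows, no draw exists — nothing in your outline establishes that the final tempo in the decisive segment belongs to Player 2. To repair the proposal, replace the outward-sequential processing with the parity-pairing argument and add the splitting second move for each nonzero residue.
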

\begin{proof}
    We consider the four cases ($k=0$, $1$, $2$, and $3$), beginning with an observation of what can happen when played on $C_3$ through $C_6$.

     First, note that Player N will always win the Sign Game played on $C_3$ since they can guarantee at least two vertices will have different assigned values. The only vertex assignments on $C_4$ that result in a nonzero score are when all vertices have the same assignment (resulting in a score of $4$), or when $v_1$ and $v_3$ are assigned one value and $v_2$ and $v_4$ are assigned the other (resulting in a score of $-4$). Both players playing optimally will prevent this from happening.

     Of the final vertex assignments on $C_5$, the only ones resulting in a negative score are when, without loss of generality, $v_1$ and $v_3$ have the same assigned value and the remaining three vertices all have the other assigned value. See Fig. \ref{c5base}. Because Player 1 will assign three of the vertices in $G$, when this is Player P, they can guarantee such an arrangement does not occur (and consequently that $s(G) > 0$ since there are an odd number of edges). When Player P plays second, by utilizing a mirroring strategy and playing on a vertex adjacent to what Player N just assigned, and assigning the same value, Player P can guarantee the game results in a positive score.

\begin{figure}[htb] 
\centering
\begin{tikzpicture}
\draw[thick] (.84,.18)--(0.15,1.32);
\draw[thick] (.15,1.68)--(1.85,2.85);
\draw[thick] (3.85,1.68)--(2.15,2.85);
\draw[thick] (3.16,.18)--(3.85,1.32);
\draw[thick] (1.22,0)--(2.78,0);

\draw (1,0) circle (.22cm);
\node (00) at (1,0) {$+$};
\draw (3,0) circle (.22cm);
\node (00) at (3,0) {$+$};
\draw (0,1.5) circle (.22cm);
\node (00) at (0,1.5) {$-$};
\draw (4,1.5) circle (.22cm);
\node (00) at (4,1.5) {$-$};
\draw (2,3) circle (.22cm);
\node (00) at (2,3) {$+$};

\end{tikzpicture}
\caption{$s(G) < 0$}
\label{c5base}
\end{figure}

The proof of $C_6$ is similar to that of $C_5$ and is left to the reader.

We proceed now to consider arbitrary length cycle graphs. When $k = 0$, after the first turn, we can consider playing the game on $P_{n+1}$, where $v_1$ and $v_{n+1}$ inherit the initial vertex assignment. Consider just the sequence of endvertices: $v_1$, $v_5$, $v_9$, $\ldots$, $v_{n+1}$. When all these vertices have been assigned values, since $v_1$ and $v_{n+1}$ have the same assigned value, the number of changes in assignment value (e.g., $v_1$ is assigned $+1$ and $v_5$ assigned $-1$ would be an example of a change in assignment value) must be even. Thus, the number of $4$-segments whose endvertices have opposite assigned values is even. Lemma \ref{p5lemma} tells us that the player to assign at least two of the non-endvertices of one of these $4$-segments will bank at least $2$ points (and will bank $4$ points if they assign all three non-endvertices). 

Thus, when one player becomes the first to play a non-endvertex on one of these $4$-segments, the other player will choose to mirror and be the first to play a non-endvertex of another of these $4$-segments (and because there is an even number of them, this will always be possible). Optimal play also guarantees that on any $4$-segment with two endvertices assigned the same value, each player will assign at least one of the interior vertices, and Lem. \ref{p5lemma} gives that this $4$-segment contributes nothing to the game's final score. Ultimately, when $k =0$, the score of the Sign Game is $0$, resulting in a draw.

Next, assume $k = 1$ and $n > 5$ (as the $n = 5$ case was specifically proven above). When Player N plays first, Player P can assign $v_{n-4}$ the same value as what Player N assigned $v_{n+1}$. The resulting graph is SG-equivalent to a disjoint union of path graphs, one of which is $P_6$ with only its first and last vertices assigned the same value, and, the other whose length is a multiple of $4$, with only its first and last vertices assigned the same value. By Player P mirroring, the $P_6$ component will contribute positively to the game's final score, as per the discussion of $C_5$ above. The other component, via an identical argument to the $k=0$ case, contributes zero to the final score.

In the case that $k = 1$ and Player P plays first, regardless of what vertex Player N assigns on their first move, Player P can assign either $v_6$ or $v_{n-4}$ the same assigned value as $v_1$ (and $v_{n+1}$), so that the resulting graph is SG-equivalent to a disjoint union of path graphs, one of which is length $6$ with only its first and last vertices assigned the same value, and, the other whose length is a multiple of $4$, with only one degree $2$ vertex assigned a value (that Player N first assigned). As when Player N played first, Player P can simply employ a mirroring strategy to guarantee victory, as the component whose length is a multiple of $4$ only has a single degree $2$ vertex assigned a value (so that the $k=0$ argument still holds).

When $k = 2$, Player 2 will choose to assign $v_{n-5}$ the same value as Player 1 assigned $v_1$ (and consequently $v_{n+1}$). The resulting graph is SG-equivalent to the disjoint union of a $P_7$ graph with its first and last vertices assigned the same value, and, a path graph whose length is a multiple of $4$, also with its first and last vertices assigned the same value. The first of these is SG-equivalent to $C_6$, which Player 2, via a mirroring strategy, can guarantee banks points in their favor. The latter, as in the $k=0$ case, will contribute no points to the game's final score. Hence, Player 2 wins the game in this case.

When $k = 3$, we consider separately the scenarios determined by which player has the first move. In either case, we will consider playing the game on $P_{n+1}$, viewed as $4$-segments and a single $3$-segment with $v_1$ and $v_{n+1}$ inheriting the sign that was first played on $C_n$.

After Player P plays first, Player N can choose to assign $v_{n-2}$ the same value as $v_{n+1}$. Then, by employing a mirroring strategy, Player N can guarantee the $3$-segment contributes negatively to the final score. The collection of $4$-segments, the path from $v_1$ through $v_{n-2}$, is SG-equivalent to the $k = 0$ case, contributing $0$ points to the final score.  Thus, Player N wins.

Consider next if Player N plays first. Regardless of what vertex Player P assigns on their first turn, Player N can assign either $v_4$ or $v_{n-2}$ the same value as their first move, so that the resulting graph can be viewed as a single $3$-segment (with only its endvertices assigned matching values) and the remaining vertices grouped into $4$-segments.

Using a similar mirroring strategy as when Player P played first, Player N can guarantee the $3$-segment contributes negatively to the game's final score. The conclusion that the $4$-segments do not contribute to the game's final score is similar to that in the $k = 1$ case.

Ultimately, we have that Player N wins the Sign Game when $k=3$, consequently proving the result.

\end{proof}

\section{Conclusion}

The simplicity of this game lends itself to a plethora of interesting questions worthy of exploration. Combinations of these possibilities and/or incorporating deeper mathematical techniques opens the door for even more considerations. A perusal of Gallian's growing survey of graph labeling \cite{2} would likely yield even more approaches.

In the Sign Game in this paper, the two players had free choice as to what vertex they would assign and what assignment they would give it. In the context of the introduction (political support in a social network), it is likely that individuals feel some sort of pressure to conform from their connections (pressure to assign a certain value to a vertex) while political groups themselves may prioritize influencing certain individuals (which vertices should be assigned on a given turn). Could a weighting system or probability function be incorporated into the game to mimic these scenarios?

When it comes to social networks, certain individuals (influencers) have a lot of sway. In this game, this could involve a probability function assigned to one of the players so that they may get an immediate additional turn following one of their turns. Perhaps risk becomes involved; if the player chooses to ``take a chance" to get an additional turn but does obtain it, and in doing so the other player will have back-to-back turns, when is it advantageous to take such a gamble?

In politics, there are also those people who just do not care. How does the game play if vertices can be assigned a value of $0$? What if the game is expanded to three players and the goal of the third player is to end the game with a score of $0$?

The variants listed above are all related to vertex assignments, but another clear way to alter the game is to alter the graphs. Like any game played on mathematical graphs, changing the type of graph can result in a brand new and very different game. Studying the Sign Game on families of graphs beyond those investigated here could lead to further interesting results, as could adjusting the rules and extending the game to different types of graphs including multigraphs, directed graphs, or graphs with weighted edges.

\end{document}